\documentclass[11pt]{amsart}

\usepackage{amssymb, amsthm, amsmath}
\usepackage{bbm}
\usepackage{xcolor}
\usepackage{graphicx}
\usepackage{mathtools}

\usepackage{pgf,tikz}
\usepackage{mathrsfs}
\usetikzlibrary{arrows}
\usepackage{pgfplots}
\pgfplotsset{compat=1.15}

\newtheorem{theorem}{Theorem}[section]

\newtheorem{lemma}[theorem]{Lemma}

\newtheorem*{theorem*}{Theorem}{\bf}{\it}
\newtheorem*{proposition*}{Proposition}{\bf}{\it}
\newtheorem*{observation*}{Observation}{\bf}{\it}
\newtheorem*{lemma*}{Lemma}{\bf}{\it}
{\bf}{\it}

\theoremstyle{definition}

\theoremstyle{remark}
\newtheorem{remark}[theorem]{Remark}

\newcommand{\R}{\mathbb R}

\newcommand*{\QEDA}{\hfill\ensuremath{\square}}
\newcommand{\abs}[1]{\vert #1 \vert}
\def\N{ {\mathbb{N}} }
\newcommand{\ds}{\displaystyle}
\newcommand{\defeq}{\vcentcolon=}

\def\XXint#1#2#3{{\setbox0=\hbox{$#1{#2#3}{\int}$ }
\vcenter{\hbox{$#2#3$ }}\kern-.6\wd0}}

\begin{document}
\title[Elliptic Adaptation of the log log Thoerem]{An elliptic adaptation  of ideas of Carleman and Domar from complex analysis related to Levinson's log log theorem}
\keywords{Levinson's  theorem, three balls theorem}

\author{A. Logunov}
\address{Alexander Logunov: Department of Mathematics, Princeton University, Princeton, NJ, USA}
\email{log239@yandex.ru}
\author{H.  Papazov}
\address{Hristo Papazov: Department of Mathematics, Princeton University, Princeton, NJ, USA}
\email{h.g.papazov@gmail.com}

\begin{abstract}
  Using  the three  balls inequality, we adapt the elegant ideas of Carleman and Domar from complex analysis to linear elliptic PDE and generalize the classical Levinson's loglog theorem.
\end{abstract}

\maketitle

\section{Introduction.}

 We start with a problem of improving estimates for solutions to  elliptic PDE.
 Assume that $u$ is a harmonic function in $\mathbb{R}^3$, and we know that $|u(x)|$ is bounded by a given majorant $M(|x_3|)$ outside the plane $\{x \in \mathbb{R}^3: x_3=0 \}$.
 The majorant is allowed to explode when $x_3$ tends to $0$. For instance, one can take $M(t)=e^{1/t}$, $t>0$.  Why does the harmonicity of $u$ across the plane imply a better bound $|u(0)|< 10^6$?
 If the majorant $M(t)$ was $1/\sqrt t$ in place of $e^{1/t}$, one could use the mean value property of $u$ in the unit ball with center at the origin to see immediately that
 $|u(0)|$ can be estimated by the average of $M(|x_3|)$ over the unit ball.
  The statement is true for $M(t)=e^{1/t}$ though the mean-value argument fails.
  Moreover, one can take as $M(t)$ any measurable function with $\int_{0}^{1}\log^+\log^+ M(t)<+\infty$, and then $|u(x)|$ will be bounded in $B_{1/2}(0)$ by some constant $C_M$, which depends on $M$ only. Here we use the notation $\log^+(x)= \begin{cases} \log x, & x \geq 1 \\ 0, & x \leq 1    \end{cases}.$
  
 The first result of this kind is due to Carleman, who proved the following generalization of Liouville's theorem for holomorhpic functions.
  \begin{theorem}[\cite{C}]
    Let $f$ be a holomorphic function in $\mathbb{C}$ and let $M(\theta)$ be a measurable function on the unit circle with
    $$ \int_{0}^{2\pi}\log^+\log^+ M(\theta)<+\infty.$$
    If on each ray passing through the origin $|f(re^{i\theta})|\leq M(\theta)$, then
    $f\equiv const$.
  \end{theorem}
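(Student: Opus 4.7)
The plan is to show that $f$ is bounded on $\mathbb{C}$ and then invoke Liouville's theorem. The natural subharmonic auxiliary is $u(z) := \log^+|f(z)| = \max(\log|f(z)|,0)$, which is subharmonic (as the maximum of the subharmonic function $\log|f|$ with the constant $0$) and satisfies the pointwise ray bound $u(re^{i\theta}) \le \log^+M(\theta)$. A first attempt uses a Domar-type sub-mean estimate: at $z_0 = r_0 e^{i\theta_0}$ with $r_0>0$, the sub-mean inequality on $B(z_0, r_0/2)$ converted to polar coordinates around the origin yields
\[
u(z_0) \;\le\; \frac{4}{\pi r_0^{2}} \int_{B(z_0,\, r_0/2)} u\, dA \;\le\; C \int_{|\theta - \theta_0|\le \pi/6} \log^+ M(\theta)\, d\theta,
\]
a bound independent of $r_0$. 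If $\log^+M$ were integrable this alone would finish the proof; under our weaker hypothesis, $\log^+M$ may have non-integrable angular singularities and the estimate is inconclusive.

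The remedy is to combine this averaging with three-circles / Phragm\'en--Lindel\"of on narrow ``bad'' sectors and to use the loglog condition to make the resulting errors summable. Decompose $[0,2\pi)$ into the dyadic super-level sets $E_k := \{\theta : \log^+M(\theta) \in (2^k,\,2^{k+1}]\}$; Chebyshev applied to $\log^+\log^+M$ gives $\sum_k k\,|E_k|<\infty$, so ``bad'' angles concentrate on sets of rapidly decreasing measure. On each narrow bad sector, three-circles / Phragm\'en--Lindel\"of propagates the ray bound $|f|\le e^{2^{k+1}}$ from the two bounding rays (which lie in lower-level $E_j$'s) into the sector's interior with a loss controlled by the angular width $|E_k|$; summing these losses via $\sum_k k|E_k|<\infty$ yields a uniform bound $|f(z)| \le C_M$ depending only on $\|\log^+\log^+M\|_{L^1}$, after which Liouville finishes the proof.

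The main obstacle is the bookkeeping in this iteration: each three-circles propagation must balance its multiplicative loss against the angular width of the corresponding bad sector, and the double-logarithm hypothesis is exactly what makes the final sum converge. A naive Jensen or Poisson representation would require $\log M \in L^1$ and would fail at the first non-integrable singularity of $\log M$, while the three-circles approach succeeds because it turns a pair of ray bounds into a sector bound whose dependence on the angular width is only logarithmic, making the loglog integrability the sharp sufficient condition.
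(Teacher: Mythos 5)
This theorem is stated in the paper as Carleman's classical result and is cited, not proved, so there is no in-paper proof to compare against; the paper's own contribution is an adaptation of the underlying technique (Domar's iterative point-selection combined with a three balls inequality) to elliptic equations. Judged on its own terms, your proposal has the right opening move (subharmonicity of $\log^+|f|$, the sub-mean value estimate over a disc subtending angle $\pi/6$ at the origin, and the observation that the loglog hypothesis is equivalent to summability of the measures of the super-level sets of $\log^+ M$ at doubly exponential thresholds), but the central step is a genuine gap rather than a compressible detail.

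Concretely: the sets $E_k=\{\theta:\log^+M(\theta)\in(2^k,2^{k+1}]\}$ are merely measurable, so there are no ``two bounding rays'' of a ``bad sector''; one would first have to cover $\bigcup_{j\ge k}E_j$ by open angular intervals of comparable total measure whose endpoints are good angles. More seriously, Phragm\'en--Lindel\"of cannot ``propagate the ray bound into the sector's interior'' without an a priori growth bound on $f$ inside the sector, and no such bound is available here --- this is exactly the difficulty of the theorem. On a sector of opening $\beta$ the function $\exp(z^{\pi/\beta})$ has modulus $1$ on both bounding rays and is unbounded inside, so boundary-ray control alone propagates nothing; and your $f$, while bounded on each individual ray by $M(\theta)$, has no uniform bound on any subsector because $M$ need not be locally bounded in $\theta$. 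The phrase ``with a loss controlled by the angular width $|E_k|$'' is precisely the quantitative estimate that has to be proved, and no mechanism for it is given. The known proofs avoid this circularity: Carleman derives a differential inequality for $\log\log\max_{|z|=r}|f|$ from the sub-mean value inequality applied on discs where the exceptional angles occupy measure $F(e^{ak})$; Domar (and the paper's main theorem, in its elliptic form) shows that a point with $|f|\ge e^{e^{ak}}$ forces a nearby point, within distance comparable to $F(e^{ak})$, where $|f|\ge e^{e^{a(k+1)}}$, and the loglog condition makes the total displacement finite; Rashkovskii estimates harmonic measure in the open set where $|f|$ exceeds a threshold (whose angular sections have small measure) via a two-constants argument on a bounded domain. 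To repair your argument you would need to replace the Phragm\'en--Lindel\"of step by one of these quantitative devices --- e.g.\ the two-constants theorem on the truncated region $\{|f|>e^{2^{k}}\}\cap\{|z|<R\}$ with a Carleman--Tsuji harmonic measure estimate in terms of the angular widths --- and then actually carry out the summation; as written, the proof does not go through.
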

  Carleman's argument \cite{C} is elegant and short.  However his work was lost for decades and some of the further works on this subject were significantly more technical and long.
 Levinson  proved a local version of this fact, see \cite{Le} (the proof required additional regularity assumptions on the majorant).  The term "Levinson's $\log\log$ type theorems" is used in complex analysis though the lost original paper \cite{C} of Carleman was earlier.
   Domar's elegant proof \cite{K88,D1,D2} of Levinson's local theorem is barehanded and is close in spirit to Carleman's approach. Instead of writing a differential inequality for the growth of a function, Domar establishes a discrete sequence of inequalities. It makes the argument more flexible. We learned Domar's argument from Koosis's textbook \cite{K88}.
    
 Beurling found an independent proof \cite{Be} of Levinson's theorem when he was working on a problem of analytic extension across the boundary.  Beurling also showed that the result is sharp in the following sense.
  \begin{theorem}
  Given a positive monotonically decreasing function $M(t):(0,1)\to \mathbb{R}^+$ with $M(t)\to + \infty$ as $t \to +0$ and $\int_0^1 \log^+\log^+ M = +\infty$,
   there is a sequence of holomorphic functions $f_n$ in a unit disc $B_1(0)\subset \mathbb{C}$ such that
   $\displaystyle |f_n(0)| \to \infty$ and $f_n(z)\leq M(|\Im z|)$ for $z\in B_1(0)$, $|\Im z|\neq 0$.
  \end{theorem}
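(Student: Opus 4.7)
The plan is to construct, for each large $K$, a holomorphic function $f = f_K$ in the unit disc satisfying $|f(z)| \leq M(|\Im z|)$ for $\Im z \neq 0$ together with $|f(0)| \geq K$; the desired sequence $f_n$ is then obtained by taking $K = n$. My strategy is to first produce an auxiliary subharmonic function with the analogous properties and then to realize it (or a close minorant) as $\log|f|$ for a holomorphic $f$.

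First I reduce to a regular majorant. After replacing $M$ by a slightly larger, continuous, strictly decreasing function for which $\log M$ is convex in $\log(1/t)$, the divergence $\int_0^1 \log^+\log^+ M(t)\,dt = \infty$ persists. I then extract a sequence $t_k \downarrow 0$ defined by $\log M(t_k) = e^k$. A short Abel-summation argument rewrites the divergence as $\sum_k k(t_{k-1} - t_k) = \infty$, which by telescoping is essentially equivalent to $\sum_k t_k = \infty$. This last form is the clean ``budget'' against which the construction will be measured.

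The heart of the argument is to produce a subharmonic function $u$ on the unit disc with $u(z) \leq \log M(|\Im z|)$ for $\Im z \neq 0$ and $u(0) \geq K$. I would build $u$ as a Green potential of a suitable signed measure placed on (or just off) the imaginary axis at the heights $t_k$, or equivalently as the real part of an explicit Cauchy-type integral, tuned scale by scale so that on each horizontal line $|\Im z| = y$ the potential does not exceed $\log M(y)$, while the contributions at the origin accumulate to at least $\sum_{k \leq N} t_k$, a divergent quantity as $N \to \infty$. Once $u$ is in hand, I realize it (or a close minorant) as $\log|f|$ for a holomorphic $f$, either by exponentiating a holomorphic Cauchy integral directly, or by invoking standard approximation of subharmonic functions by logarithms of moduli of holomorphic functions.

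The principal obstacle is this scale-by-scale design: apportioning the majorant budget across infinitely many scales so that the contributions at the origin sum to infinity without overshooting $\log M(|\Im z|)$ on any horizontal line. The divergence of $\int_0^1 \log^+\log^+ M$ is exactly the quantitative condition under which such a distribution is possible, and it is the same threshold that governs whether the iterated three-balls inequality (Carleman, Domar, Levinson) succeeds in the positive direction. Verifying that the potential respects the full horizontal profile of $M(|\Im z|)$, rather than only its value on each critical line $|\Im z| = t_k$, is where the technical bulk of the argument lies.
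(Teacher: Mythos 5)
The paper does not actually prove this statement: it is Beurling's sharpness result, quoted from \cite{Be}, so there is no in-paper argument to compare yours against. Judged on its own, your outline follows the classical route (construct a subharmonic/harmonic minorant of $\log M(|\Im z|)$ that is large at the origin, then exponentiate), and your reformulation of the hypothesis --- defining $t_k$ by $\log\log M(t_k)=k$ and showing via Abel summation that divergence of $\int_0^1\log^+\log^+M$ amounts to $\sum_k t_k=\infty$ --- is correct and is the exact counterpart of the Lemma in Section 5. But the proposal has a genuine gap precisely where you yourself locate ``the technical bulk'': the subharmonic function $u$ with $u(z)\le\log M(|\Im z|)$ off the real axis and $u(0)\ge K$ is never constructed. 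You do not specify the measure, the building blocks, or the weights, and the verification that the potential stays below $M$ on \emph{every} horizontal line is the entire content of the theorem, not a routine check. The difficulty is real: a naive single-scale block such as $\varepsilon_k\Re\cos(z/t_k)$ contributes $\varepsilon_k$ at the origin but grows like $\varepsilon_k e^{|y|/t_k}$ and violates the bound at heights $|y|\gg t_k$, so one needs blocks (harmonic measures of thin slits or conformal images of tracts, as in Domar \cite{D1} and Koosis \cite{K88}) that are simultaneously large near $0$, controlled by $e^k$ outside the strip $\{|y|<t_k\}$, and summable across scales. The assertion that each scale contributes about $t_k$ at the origin is the right target but is stated, not derived.

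There is also a directional error in your reduction: you propose replacing $M$ by a \emph{larger} regular majorant. That preserves divergence of the integral, but a holomorphic $f$ satisfying $|f|\le\widetilde M(|\Im z|)$ with $\widetilde M\ge M$ need not satisfy $|f|\le M(|\Im z|)$, which is what the theorem demands. You must instead pass to a smaller regular \emph{minorant} $\widetilde M\le M$ and then verify --- and this requires an argument, since it is not automatic --- that $\int_0^1\log^+\log^+\widetilde M$ still diverges. Finally, the last step (realizing $u$ as $\log|f|$) is unproblematic only if you arrange $u=\Re g$ for an explicit holomorphic $g$, which is the variant you should commit to; approximating a general subharmonic $u$ from below by $\log|f|$ while preserving the upper bound is an additional nontrivial step.
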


  Rashkovskii (see \cite{R91,R09}) showed that the Levinson's loglog theorem and related results by Wolf \cite{W},  Sjoberg \cite{S}, and Matsaev \cite{M60, M76} for holomorphic functions can be proved in a uniform way with the help of estimates of harmonic measure.
  This approach as well as Carleman's argument is relying on the fact that $\log |f|$ is subharmonic for any holomorphic function $f$. We would like to note
  that this fact is not available for solutions of elliptic PDE.
  
  However, the main message of this article  is that the phenomenon described in Levinson's theorem is actually of elliptic nature.  Here is a more general question.
 Let $B$ be an open ball in $\mathbb{R}^n$ and let $P$ be a differential operator in $B$. Given a subset $S\subset B$ and a function $M: \Omega \to \mathbb{R}_+\cup \{+ \infty\}$ that tends to infinity when we approach $S$, define $F_M$ to be the family of solutions $u$ to $Pu=0$ in $B$ such that $u(x)\leq M(x)$ in $\Omega$. The question is whether the functions in $F_M$ admit a uniform bound in a twice smaller ball $\frac{1}{2}B$ by some constant $C=C(M,P)$?

 We give a positive answer in a form of Levinson's loglog theorem for some class of elliptic operators. An elliptic tool, which is used here, is the three balls inequality. Sometimes it replaces a very simple and powerful fact from complex analysis stating that $\log|f|$ is subharmonic for a holomorphic function  $f$.

Generalizing the statement about holomorphic functions to elliptic PDE is non-trivial even in the case for harmonic functions in the three dimensional Euclidean space. The question whether Levinson's loglog type theorem holds for harmonic functions appeared in connection to the study of universal Taylor series, see \cite{GK14}. In the three dimensional Euclidean space, one can use symmetries of the Laplace equation to reduce Levinson's loglog type question for harmonic functions to a two-dimensional problem. This reduction was done in \cite{L}. However, it is clear that in the case of equations with variable coefficients the symmetrization techniques do not work.  
The main ingredient, which can be combined with  the ideas of Carleman and Domar, is the three balls inequality.

\section*{Acknowledgements.}
 The Packard Foundation sponsored the summer research project at Princeton University for the second author in 2020. The first author was supported in part by the Packard Fellowship and Sloan Fellowship. The first author is extremely grateful to the School of Mathematical Sciences and Institute for Advanced Study at Tel Aviv University for organizing the new visiting program for scholars "IAS Outstanding Junior Fellows" despite epidemic, bureaucratic and travel restrictions, which helped to stay afloat in very uncertain times.

 \section{Three balls inequality.} \label{sec:3b}
 In this section, we formulate the three balls inequality and overview for which classes of elliptic operators it holds.
Given a ball $B$ in $\mathbb{R}^n$ and a positive constant  $k$, we denote by $kB$ the ball with same center as $B$ and $k$ times bigger radius.
 
 We say that three balls inequality holds for solutions $u$ of  $Pu=0$ in a bounded domain $\Omega \subset \mathbb{R}^n$ if

\begin{equation} \label{eq:3b}
\sup_B |u| \leq C (\sup_{\frac{1}{2}B} |u|)^{\alpha} (\sup_{2B} |u|)^{1-\alpha}
\end{equation} 
 for any ball $B$ with $2B\subset \Omega$ and any solution 
 $u$ to $Pu=0$ in $\Omega$, where  $C>0$ and $\alpha \in (0,1)$ depend on $P$ and $\Omega$ but not on $u$ and $B$.
 
\begin{remark} We note that in the three balls inequality one can change the ratios between radii:
\begin{equation} \label{eq:3b*}
\sup_B |u| \leq C (\sup_{aB} |u|)^{\alpha} (\sup_{AB} |u|)^{1-\alpha},
\end{equation} 
where $a\in (0,1)$, $A>1$.
Another way to formulate this property is the following:
if $|u|\leq 1$ in $AB$ and $|u|\leq \varepsilon$  in $aB$, then $|u|\leq C \varepsilon^{\alpha}$.
 Using this reformulation and the Harnack chain argument, it is not difficult  to show that iterations of \eqref{eq:3b} imply \eqref{eq:3b*} and iterations of \eqref{eq:3b*} imply \eqref{eq:3b} with worse $C$ and $\alpha$. The proof of this fact can be found in the Appendix (see \ref{a1}).
 \end{remark} 
 
 The  three balls inequality holds for linear (uniformly) elliptic operators 
$$P=\sum_{0\leq|\alpha| \leq 2m } a_\alpha \partial^\alpha$$in $\mathbb{R}^n$, whose coefficients are sufficiently smooth (in particular $C^{2m}$ smoothness of the coefficients is enough , see \cite{Si}). As far as we know, it is an open question to determine the exact regularity conditions on the coefficients of linear elliptic operators of higher order that force the three balls inequality to hold. For second order linear elliptic operators, it is sufficient that the coefficients of highest order are Lipschitz (and elliptic) and the coefficients of lower order are bounded (see \cite{GL1,GL2}). Note that the three balls inequality fails if the coefficients are assumed to be merely Hölder continuous (see \cite{Mi}).

\section{Inequality for two  balls and a wild set.} \label{sec:3b*}
 
We will say that a strong quantitative unique continuation property  (SQUCP)
holds for solutions $u$ of some differential equation $Pu=0$ in a bounded domain $\Omega \subset \mathbb{R}^n$ if the  following is true for some $c,C>0$ and $\alpha \in (0,1)$: For every ball $B$ such that $2B$ is a  subset of $\Omega$ and for every measurable subset $S$ of $B$ with $$|S|> c |B| $$ (we denote the $n$-dimensional Lebesgue measure of $S$ by $|S|$), the inequality

\begin{equation} \label{eq:3b**}
\sup_B |u| \leq C (\sup_{S} |u|)^{\alpha} (\sup_{2B} |u|)^{1-\alpha}
\end{equation} 
holds for any solution to $Pu=0$ in $\Omega$. Sometimes instead of SQUCP we will say "a version of the three balls theorem for wild sets." 

\definecolor{ffffff}{rgb}{1.,1.,1.}
\definecolor{qqzzff}{rgb}{0.,0.6,1.}
\definecolor{ffqqqq}{rgb}{1.,0.,0.}
\begin{center}
\begin{tikzpicture}[line cap=round,line join=round,>=triangle 45,x=0.5cm,y=0.5cm]
\clip(-2.,-2.) rectangle (12.,12.);
\draw [line width=0.4pt,color=ffqqqq] (5.,5.) circle (1.5cm);
\draw [line width=0.4pt] (5.,5.) circle (3.cm);
\draw [rotate around={-61.00740429609983:(6.532021689678707,5.473182052611222)},line width=0.4pt,color=qqzzff,fill=qqzzff,fill opacity=1.0] (6.532021689678707,5.473182052611222) ellipse (0.7679266179030014cm and 0.42435928374177756cm);
\draw [rotate around={31.888107955519008:(4.255528573732505,5.189806288799964)},line width=0.4pt,color=qqzzff,fill=qqzzff,fill opacity=1.0] (4.255528573732505,5.189806288799964) ellipse (0.6309239456180165cm and 0.5546352997206168cm);
\draw [rotate around={-2.8624052261117443:(5.607743647622006,3.3194891003020395)},line width=0.4pt,color=qqzzff,fill=qqzzff,fill opacity=1.0] (5.607743647622006,3.3194891003020395) ellipse (0.45008230116525944cm and 0.2740672469157883cm);
\draw [rotate around={-44.35625428582464:(6.657590406230128,5.4570930132232895)},line width=0.4pt,color=ffffff,fill=ffffff,fill opacity=1.0] (6.657590406230128,5.4570930132232895) ellipse (0.3252967736339419cm and 0.25493622343433453cm);
\draw [color=qqzzff](5.1067115881133205,4.898602449601508) node[anchor=north west] {$S$};
\draw (3.6846781568219416,-0.9672854544754311) node[anchor=north west] {$SQUCP$};
\begin{scriptsize}
\draw[color=ffqqqq] (3.052663298470218,7.890797794610452) node {$B$};
\draw[color=black] (1.4528756882674163,10.517609549634805) node {$2B$};
\end{scriptsize}
\end{tikzpicture}
\end{center}

We note that \eqref{eq:3b**} holds for solutions of second order equations in divergence form in $\mathbb{R}^n$:
$$\textup{div}(A\nabla u )=0,$$
where the matrix $A$  is assumed to be uniformly elliptic
and its coefficients are Lipschitz  in  $\Omega$. The version of the three balls theorem for wild sets is non-trivial when the coefficients are assumed to be just smooth (but not real-analytic), and it is related to a recently solved conjecture of Landis  (\cite{KL88}, p.169) and a conjecture of Donnelly and Fefferman \cite{DF2}, see \cite{LM1,LM2} for the proofs.

We also note that \eqref{eq:3b**} holds for 
for linear elliptic operators 
$$P=\sum_{0\leq|\alpha| \leq 2m } a_\alpha \partial^\alpha$$in $\mathbb{R}^n$, whose coefficients are real-analytic (and elliptic) in a neighborhood of $\Omega$. The last statement is well-known to specialists, but there is no exact reference for this result, and one has to look at several sources to make such a conclusion. A reading guide is provided in the Appendix (see \ref{a2}). 

It is likely that \eqref{eq:3b**} also holds if the coefficients $a_\alpha$ are assumed to be sufficiently smooth.

 \section*{Main results.}
 
 \begin{theorem}
Let $M: (-1, 1) \to (0, \infty)$ be a measurable function
with $$ \int_{-1}^{1} \log^+ \log^+ M(y) dy < \infty$$ and let $P$ be a differential operator. Let $$\Omega \subset B_1(0) \subset \mathbb{R}^n= \{ (x,y): x \in \mathbb{R}^{n-1},  y \in \mathbb{R} \}.$$ 
 Consider the family $L_{M}$ of functions $u$ in $\Omega$  that satisfy the differential equation $Pu = 0$ and the inequality $|u(x,y)| \leq M(y)$ in $\Omega$.

\begin{enumerate}
\item[A)]
If a version of the three balls inequality for wild sets \eqref{eq:3b**} holds for solutions to $Pu=0$ in $\Omega$, 
then $L_{M}$ is uniformly bounded on any compact subset of $\Omega$. 
\item[B)]
If the three balls inequality holds for solutions to $Pu=0$ in $\Omega$ and if additionally $M(y)=M(|y|)$ is a symmetric function and monotonically decreasing on $(0,1)$, then $L_{M}$ is uniformly bounded on any compact subset of $\Omega$. 
\end{enumerate}

\end{theorem}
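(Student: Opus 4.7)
The plan is to adapt Domar's iterative scheme from complex analysis, with the three balls inequality playing the role of the subharmonicity of $\log|f|$. Argue by contradiction: suppose $L_M$ is not uniformly bounded on some compact subset of $\Omega$, so that some $u \in L_M$ has $|u(p_0)|$ arbitrarily large at a point $p_0$ deep inside $\Omega$. Starting from $p_0$, I would construct a sequence of points $p_0, p_1, p_2, \ldots \in \Omega$ at which $|u(p_k)|$ grows so rapidly that eventually $|u(p_k)| > M(y(p_k))$, contradicting $u \in L_M$.

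At step $k$, apply the appropriate three balls inequality on a ball $B_{r_k}(p_k)$ with $r_k$ shrinking geometrically (so that $\sum_k 2 r_k < \infty$ and all $p_k$ remain in a fixed compact set), and let $p_{k+1} \in B_{2r_k}(p_k)$ be a near-maximizer of $|u|$. Setting $a_k := \log^+ |u(p_k)|$ and letting $K_k$ denote the a priori bound on $|u|$ over the ``small'' piece of the three balls inequality, the estimate rearranges to the recursion
\[
a_{k+1} \ \ge\ \frac{1}{1-\alpha}\bigl(a_k - \alpha \log K_k - \log C\bigr),
\]
which makes $a_k$ grow at the geometric rate $(1-\alpha)^{-1} > 1$ as long as $\alpha \log K_k$ stays comfortably below $a_k$. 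In Part A, apply the wild-set inequality \eqref{eq:3b**} with $S_k := \{q \in B_{r_k}(p_k) : M(y(q)) \le K_k\}$; the condition $|S_k| \ge c|B_{r_k}|$ holds provided the sublevel set $\{y : M(y) \le K_k\}$ occupies a definite fraction of the interval $\bigl(y(p_k)-r_k,\ y(p_k)+r_k\bigr)$. In Part B, avoid wild sets entirely: writing $h_k := |y(p_k)|$, apply the regular inequality \eqref{eq:3b} to $B_{r_k}(p_k)$ with $r_k < h_k$ so that $\tfrac12 B_{r_k}(p_k)$ stays on one side of the plane $y=0$; the symmetry and monotonicity of $M$ then yield $\sup_{\frac12 B_{r_k}(p_k)} |u| \le M(h_k - r_k/2) =: K_k$ directly, with no wild set needed.

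The heart of the argument, and what I expect to be the main obstacle, is coordinating the sequences $r_k$ and $K_k$ so that the iteration is sustainable and the $p_k$ remain inside $\Omega$. This is where the log-log integrability enters: under the substitution $t = \log\log K$, the hypothesis $\int_{-1}^{1}\log^+\log^+ M\,dy < \infty$ becomes $\int_e^\infty \bigl|\{y: M(y) > K\}\bigr|\,\frac{dK}{K \log K} < \infty$, which controls the decay of the distribution function of $M$. In Part A this decay is precisely what makes the wild-set requirement on $K_k$ compatible with $\alpha \log K_k \ll a_k$; in Part B the same decay controls how fast $\log M(h_k - r_k/2)$ may grow relative to $a_k$ as the $r_k$ and $h_k$ shrink. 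Once this delicate balancing is in place, the geometric growth $a_k \gtrsim \rho^k a_0$ with $\rho = (1-\alpha)^{-1} > 1$ overtakes $\log M(y(p_k))$ at a finite $k$, completing the contradiction and proving both parts.
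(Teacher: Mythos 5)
Your overall architecture --- a Domar-type iteration driven by the three balls inequality, with the wild-set version for A) and monotonicity for B), ending in a contradiction along a convergent sequence of points --- is exactly the paper's, and your recursion for $a_k=\log|u(p_k)|$ is the right one. But the step you defer as ``the main obstacle'' is in fact the entire content of the proof, and the choices you do commit to would not close the argument. The radii must not shrink geometrically: the correct choice is $r_k=F(e^{ak})$, where $F(t)=\lambda_1\{y:\log^+M(y)\ge t\}$ is the distribution function of $\log^+ M$ and the threshold is $K_k=e^{e^{ak}}$. With this coupling the bad set $\{|u|>K_k\}\subset\{M\ge K_k\}$ projects onto a set of $y$-measure at most $r_k$, so it occupies at most a fixed fraction of $B_{r_k}(p_k)$ (one can fit $\frac{1}{4}B$ in the complement), which is what makes the wild-set hypothesis $|S_k|\ge c|B|$ hold with $c=4^{-n}$ for \emph{every} measurable $M$; and the summability $\sum_k r_k=\sum_k F(e^{ak})<\infty$, which keeps the $p_k$ inside $\Omega$, is precisely equivalent to $\int\log^+\log^+M<\infty$ by a short lemma on distribution functions. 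If you instead impose geometric decay of $r_k$ a priori, the density condition forces $F(e^{ak})\lesssim r_k$, i.e.\ a much stronger hypothesis on $M$ than log-log integrability.

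Your geometry in Part B) also fails as stated: since $|u(p_k)|\ge K_k$ and $|u|\le M$, the point $p_k$ necessarily lies in the strip $\{|y|\le F(e^{ak})/2\}$ where $M$ is large, so $h_k=|y(p_k)|$ may be arbitrarily small (even zero) and $M(h_k-r_k/2)$ is not controlled --- it can exceed $|u(p_k)|$ itself, making the three balls inequality vacuous. The fix is not to center at $p_k$ but to re-center off the singular plane: take $B$ of radius $4F(e^{ak})$ centered at $q_k=(x_k,2F(e^{ak}))$, so that $B\ni p_k$ while $\frac{1}{4}B$ lies entirely outside the strip and hence $\sup_{\frac{1}{4}B}|u|\le e^{e^{ak}}$. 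Finally, your intended endgame ($|u(p_k)|$ overtaking $M(y(p_k))$) is not justified, since $M(y(p_k))$ can itself blow up along the sequence, which drifts toward the set where $M$ is unbounded; the contradiction is rather that $\sum_k|p_{k+1}-p_k|\lesssim\sum_k F(e^{ak})<\infty$ makes $p_k$ converge to an interior point of $\Omega$ while $|u(p_k)|\to\infty$, contradicting the continuity of $u$.
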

 
 As previously mentioned, the three balls inequality  holds for linear elliptic differential equations with sufficiently smooth coefficients, and therefore, $\textup{B)}$ can be applied to them. However, one has to assume additionally that $M$ is monotone.
 The monotonicity assumption on $M$ can be removed if the version of the three balls theorem for wild sets  \eqref{eq:3b**} holds for $P$. This property holds for  second order elliptic operators in divergence from with Lipschitz coefficients and linear elliptic operators of arbitrary order with real-analytic coefficients. 
 
 \section{The proof of the main result.}
 Let $K$ be a compact subset of $\Omega$ and let $d = \textup{dist}(\partial \Omega, K)$ denote the distance from $K$ to $\partial \Omega$ . We would like to show 
 that there is a constant $A=A(d,M,P)$ such that 
 any solution $u$ to $Pu=0$ satisfying $|u(x,y)| \leq M(y)$
in $\Omega$ is uniformly bounded: $|u(x,y)| \leq A$ on $K$. 
 
 Assume that $p\in K$ and $|u(p)|> e^{e^{2N}}$ for a large   integer $N$.
 We will construct a sequence of points $p_k$ inductively
 with the property $$|u(p_k)|\geq e^{e^{ak+N}},$$
 where  $a \in (0,1)$ is a small positive constant, which will be defined later.
 The sequence $p_k$ will not start  from $k=1$, but from $k=N$, and we put $p_N=p$.
 
 The construction is inductive. Given $p_k$, we will find $p_{k+1}$.
 Consider the set $E_k = \{x \in \Omega: |u(x)| > e^{e^{ak  }}  \}$. Since $|u(x,y)| \leq M(y)$, the last set is contained in 
 $R_k=\{(x,y) \in B_1(0) : M(y) \geq e^{e^{ak }}  \}$.
Define the distribution function $F$ of $\log^+|M|$:
$$F(t):=\lambda_1(y \in (-1,1): \log^+ M(y)\geq t),$$ where $\lambda_1$ denotes the one-dimensional Lebesgue measure.
The projection of $R_k$ onto the $y$-axis has one-dimensional Lebesgue measure $F(e^{ak})$, and therefore, the projection of $E_k$ onto the $y$-axis has one-dimensional Lebesgue measure not greater than $F(e^{ak})$.

\definecolor{qqttcc}{rgb}{0.,0.2,0.8}
\definecolor{zzttqq}{rgb}{0.6,0.2,0.}
\begin{center}
\begin{tikzpicture}[line cap=round,line join=round,>=triangle 45,x=1.0cm,y=1.0cm]
\clip(-1.7,-1.1) rectangle (2.5,3.3);
\fill[line width=0.pt,color=zzttqq,fill=zzttqq,fill opacity=0.10000000149011612] (-10.,2.) -- (-10.,1.) -- (8.,1.) -- (8.,2.) -- cycle;
\draw [rotate around={63.43494882292217:(0.4880042808072267,1.0909905755854472)},line width=0.4pt] (0.4880042808072267,1.0909905755854472) ellipse (2.0797076269495016cm and 1.7536202022079725cm);
\draw [line width=0.4pt,domain=-1.7:2.5] plot(\x,{(--36.-0.*\x)/18.});
\draw [line width=0.4pt,domain=-1.7:2.5] plot(\x,{(--18.-0.*\x)/18.});
\draw [rotate around={-52.19069729554537:(1.1394787644787645,1.4618478368478367)},line width=1.2pt,dash pattern=on 1pt off 1pt,color=qqttcc,fill=qqttcc,fill opacity=0.5] (1.1394787644787645,1.4618478368478367) ellipse (0.38800843322968276cm and 0.3179975036056134cm);
\draw [line width=0.4pt] (-0.002176317977117037,-1.1) -- (-0.002176317977117037,3.3);
\draw [line width=0.4pt,domain=-1.7:2.5] plot(\x,{(-0.-0.*\x)/3.0400992277724908});
\draw (1.9258773268450656,-0.020106179766621785) node[anchor=north west] {$\overrightarrow{x}$};
\draw (-0.5192647526932077,3.337315176455448) node[anchor=north west] {$y$};
\draw [->,line width=0.4pt] (0.,1.) -- (0.002176317977117037,2.);
\draw [->,line width=0.4pt] (0.002176317977117037,2.) -- (0.,1.);
\begin{scriptsize}
\draw[color=black] (-1.519848058058115,0.40150548823822335) node {$\Omega$};
\draw[color=zzttqq] (-1.4833400026040529,1.7669084130612538) node {$R_k$};
\draw[color=qqttcc] (1.6816255866035668,1.7359644374278707) node {$E_k$};
\draw[color=black] (-0.5181105701835088,1.5237925257568568) node {$F(e^{ak})$};
\end{scriptsize}
\end{tikzpicture}
\end{center}

We will first explain case A) of the theorem.
Consider the ball $B=B_{r_k}(p_k)$, where $r_k= F(e^{ak})$. Assume that 
$2B\subset \Omega$. Now, the set $S_k= \{ (x,y) \in B: |u(x,y)| \leq e^{e^{ak}} \} = B \setminus E_k$ has $n$-dimensional measure $|S_k|> c |B|$. One can take $c=1/4^n$. Indeed, note that $|S_k| = |B| - |B \cap E_k|$, and since the projection of $E_k$ onto the $y$-axis has one-dimensional Lebesgue measure not greater than $F(e^{ak})$, we get that $|B \cap E_k| \leq |D_k|$, where $D_k$ is the intersection of $B$ with a strip of width $F(e^{ak})$ as shown by the figure below. Finally, observe that one can fit $\frac{1}{4}B$ inside $B \setminus D_k$. Therefore, the constant $c = 1/4^n$ does the job.

\begin{center}
\includegraphics[scale = 0.7]{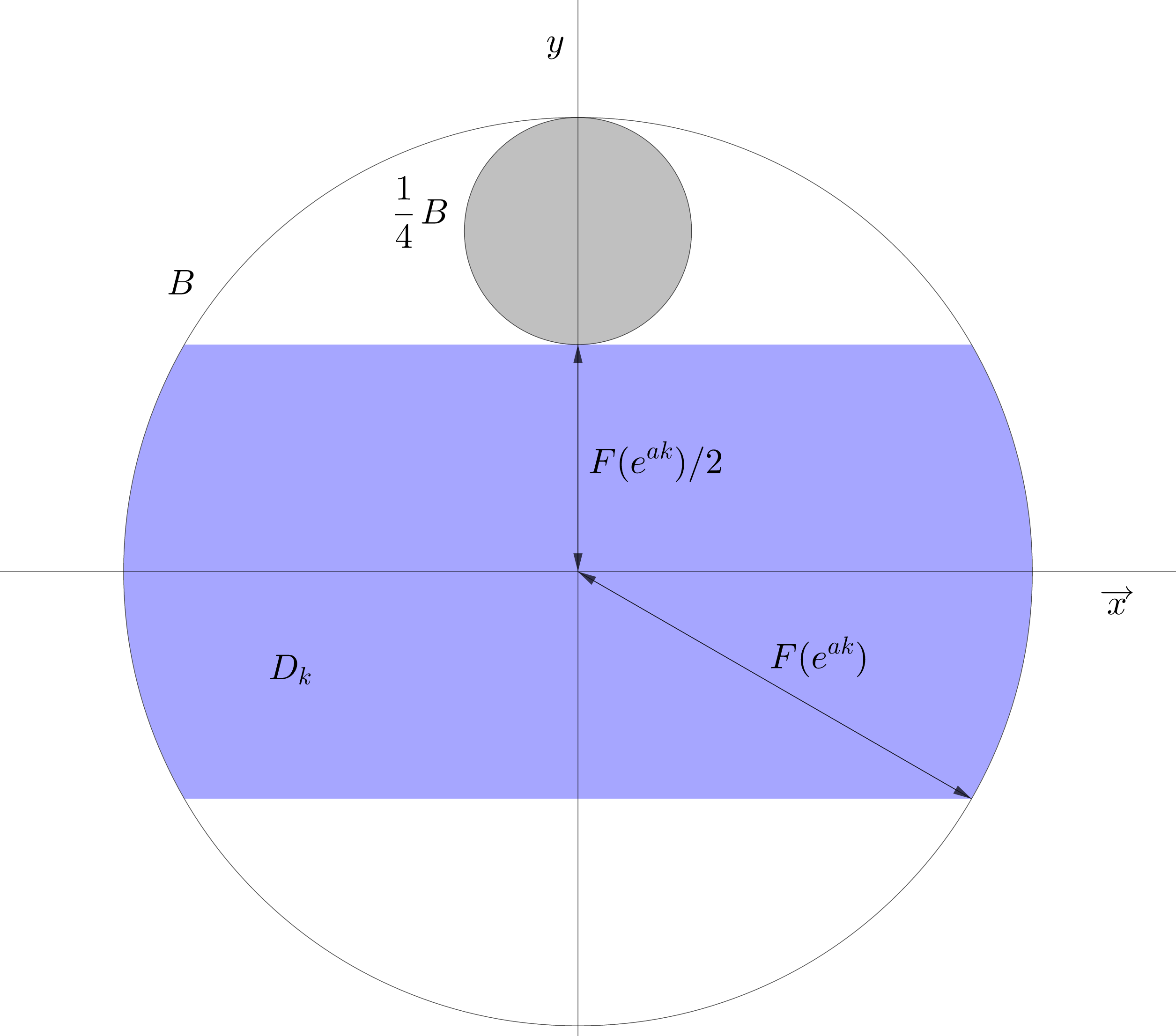}
\end{center}

At the same moment
$$\sup_B |u| \geq |u(p_k)| \geq e^{e^{ak+N}}.$$
Applying the version of the three balls inequality \eqref{eq:3b**} for $S_k$, $B$, $2B$, and $u$, we obtain
\begin{equation} \label{proof:3b}
(1-\alpha) \log \sup_{2B} |u| + \alpha \log \sup_{S_k} |u| \geq \log \sup_B |u| - \log C. 
\end{equation}

Observe that 
\begin{equation} \label{proof:3b2}
 \frac{\alpha}{2} \log \sup_B |u| > \alpha \log \sup_{S_k} |u| + \log C 
 \end{equation}
 because 
 $$  \log \sup_B |u|  \geq e^{ak+N} \geq 2e^{ak} + \frac{2\log C}{\alpha} $$
 for $N$ large enough.
 Combining \eqref{proof:3b} and \eqref{proof:3b2}, we conclude that 
 $$ \log \sup_{2B} |u| > \frac{1-\alpha/2}{1-\alpha} \log \sup_{B} |u| \geq \frac{1-\alpha/2}{1-\alpha} e^{ak+N}.  $$
 Now, we make the choice of $a\in (0,1)$: 
 $$ e^a = \frac{1-\alpha/2}{1-\alpha}.$$
 Note that we can safely assume $\alpha \leq 1/2$ (if $\alpha > 1/2 $, then the three balls inequality also holds with $\alpha = 1/2$ because $\sup_{S_k} |u| \leq \sup_{2B} |u|$).
 This choice of $a$ implies that for any point $p_k \in K$ with $$|u(p_k)| \geq e^{e^{ak+N}}$$ 
 and $$\textup{dist}(p_k,\partial \Omega)> 2F(e^{ak}),$$
 there is a point $p_{k+1}$ with 
 $|u(p_{k+1})| \geq e^{e^{a(k+1)+N}}$ and
 $$|p_{k+1}-p_{k}| < 2F(e^{ak}).$$
 
 We start our sequence of points $p_k$ with $p_N=p$ and 
 the sequence $p_k$ converges to a point in $\Omega$ if 
 $$ \sum_{k=N}^{\infty} 2F(e^{ak})< \textup{dist}(p,\partial \Omega).$$
 
 All that remains is to show that 
 $\sum_{k=0}^{\infty} 2F(e^k)<\infty $. Let $f(y)=\log^+M(y)$.

 \begin{lemma} Let $f(y)$ be a measurable function on $(-1,1)$ and $a>0$. Then $\int_{-1}^1 \log^+ |f|< \infty$ if and only if the following condition holds for the distribution function  $F(y)= \lambda_1(\{y \in (-1,1)\colon  |f(y)|\geq t\})$:
 $$ \sum_{k=0}^{\infty} F(e^{ak})<+\infty.$$ 
\end{lemma}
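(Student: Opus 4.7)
The plan is to convert everything into a single statement about the distribution function via the layer-cake / Fubini identity, then compare a sum to an integral using that $F$ is non-increasing.

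First I would note that $F$ is non-increasing, $F \leq 2$ (since we work on $(-1,1)$), and $\log^+|f(y)| \geq t$ if and only if $|f(y)| \geq e^t$ for $t>0$. Writing $\log^+|f(y)| = \int_0^\infty \mathbf{1}_{\{\log^+|f(y)| \geq t\}}\, dt$ and applying Fubini gives
\begin{equation*}
\int_{-1}^{1} \log^+ |f(y)|\, dy = \int_0^\infty \lambda_1\bigl(\{y \in (-1,1) : |f(y)| \geq e^t\}\bigr)\, dt = \int_0^\infty F(e^t)\, dt.
\end{equation*}
The substitution $t = as$ then gives $\int_0^\infty F(e^t)\, dt = a \int_0^\infty F(e^{as})\, ds$, so the integrability of $\log^+|f|$ is equivalent to the finiteness of $\int_0^\infty F(e^{as})\, ds$.

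Next, since $F$ is monotonically non-increasing, for every integer $k \geq 0$ and every $s \in [k, k+1]$ we have $F(e^{a(k+1)}) \leq F(e^{as}) \leq F(e^{ak})$. Integrating over $[k,k+1]$ and summing in $k$ yields the two-sided bound
\begin{equation*}
\sum_{k=1}^{\infty} F(e^{ak}) \;\leq\; \int_0^\infty F(e^{as})\, ds \;\leq\; \sum_{k=0}^{\infty} F(e^{ak}).
\end{equation*}
Since the missing term $F(1) \leq 2$ is finite, the sum $\sum_{k=0}^\infty F(e^{ak})$ and the integral $\int_0^\infty F(e^{as})\, ds$ converge together, which combined with the previous paragraph proves the equivalence.

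There is no real obstacle; the only point to be careful about is the distinction between $\log^+$ and $\log$ near the threshold $|f| = 1$, which is handled automatically by replacing $|f|$ with $\max(|f|,1)$ or by restricting the layer-cake integral to $t > 0$ as above. Everything else is a routine comparison of a decreasing function to a Riemann sum.
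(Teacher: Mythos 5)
Your proof is correct and follows essentially the same route as the paper's: both reduce the statement to the distribution function via Fubini and then exploit the monotonicity of $F$. The paper slices the domain into the discrete level sets $E_i=\{y: e^{ai}\leq |f(y)|<e^{a(i+1)}\}$ and interchanges a double sum, while you use the continuous layer-cake identity $\int_{-1}^{1}\log^+|f| = \int_0^\infty F(e^t)\,dt$ followed by a Riemann-sum comparison; the difference is purely presentational.
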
 
 \begin{proof}
 $F(t)$ is a monotonically  decreasing function. Note that
 $$\int_{-1}^1 \log^+ |f| = \sum_{i=0}^{\infty} \int_{E_i} \log^+ |f|,  \textup{ where } E_i=\{y \in (-1,1): e^{ai}\leq|f(y)| < e^{a(i+1)} \}. $$
 The first integral $\int_{E_0} \log^+ |f|$ is always finite, while for $i\geq 1$, 
 $$\int_{E_i} \log^+ |f| \sim i \lambda_1(E_i).$$
  Finally,
 $$  \sum_{i=1}^{\infty} i \lambda_1(E_i) = \sum_{k=1}^{\infty} \sum_{i=k}^{\infty}  \lambda_1(E_i) =\sum_{k=1}^{\infty} F(e^{ak}).$$
 \end{proof}
 
 We come back to case A). Since $\sum_{k=0}^{\infty} 2F(e^k)<\infty $, we see that for sufficiently large $N$
 $$ \sum_{k=N}^{\infty} 2F(e^k)< \textup{dist}(K,\partial \Omega) \leq \textup{dist}(p,\partial \Omega).$$
 The sequence of points $p_k$ converges to a point in $\Omega$, while the values of $|u(p_k)|$ diverge. This contradicts the continuity of $u$. Therefore, $|u| < e^{e^{2N}}$ on $K$. Thus, $u$ admits a uniform bound on every compact subset $K$ of $\Omega$, which depends on $\textup{dist}(K,\partial \Omega)$, on $M$, and on the constants in the three balls inequality for wild sets, determined by the differential operator $P$.
 
 Case B) is similar to case A). We assume the contrary and construct a Cauchy sequence of points with divergent values. There is only one change in the proof. Namely, we are allowed to apply only the standard version \eqref{eq:3b*} of the three balls theorem.  Given $p_k$, we need to find a ball nearby where the values of $|u|$ are smaller than $e^{e^{ak}}$. Surely, we cannot take a ball centered at $p_k$ because $|u(p_k)| \geq e^{e^{ak+N}}$. Let $p_k=(x_k,y_k)$, where $x_k \in \mathbb{R}^{n-1}$ and $y_k \in \mathbb{R}$. The monotonicity of the majorant $M$ allows us to conclude that
 the set $R_k=\{(x,y): M(y) \geq e^{e^{ak }}  \}$ is 
 contained in $\{ (x,y): |y| \leq  F(e^{ak})/2 \}$ and therefore $$|y_k|\leq F(e^{ak})/2.$$
Assume that $\textup{dist}(p_k,\partial \Omega) > 20F(e^{ak})$. Then we can take a ball $B$ centered at $q_k=(x_k,2F(e^{ak}))$ and of radius $4F(e^{ak})$. The ball $B$ contains $p_k$. So $$\sup_B |u| \geq e^{e^{ak+N}},$$ while the four times smaller ball $\frac{1}{4}B= B_{F(e^{ak})}(q_k)$ does not intersect $R_k$, and
therefore,
$$ \sup_{\frac{1}{4}B}|u| \leq e^{e^{ak}}.$$
Applying three balls inequality for $\frac{1}{4}B$,$B$,$2B$,
and $u$ and proceeding in a similar fashion to case $A)$,
we conclude that there is a point $p_{k+1} \in 2B$
with 
 $|u(p_{k+1})| \geq e^{a(k+1)+N}$ and
 $$|p_{k+1}-p_{k}| < 20F(e^{ak}).$$
 The rest of the argument is similar.
 The sum $\sum_{k=N}^\infty 20F(e^{ak})< \textup{dist}(p,\partial \Omega)$ for $N$ large enough. The sequence of points $p_k$ converges to a point in $\Omega$, while the values of $|u(p_k)|$ diverge, which implies a contradiction.
 \QEDA
 
  \section{Appendix}
\subsection{Three Balls Inequality for Arbitrary Ratios} \label{a1}

Let $\Omega \subseteq \R^n$ be a domain. Given a function $u \colon \Omega \to \R$ and $B_{r}(x) \subseteq \Omega$, let $M_x(r) \defeq \sup_{{B}_{r}(x)} \abs{u}$.
\begin{lemma}
With the notation above, assume that for some ratios $1 < a < b$ and some constants $C> 0$ and $\alpha \in (0,1)$ the three balls inequality
\begin{equation*} \label{ineq}
    M_x(ar) \leq C M_x(r)^{\alpha} M_x(br)^{1-\alpha}
\end{equation*}
holds whenever $B_x(br) \subseteq \Omega$. Then, the three balls inequality will continue to hold for any other set of ratios of radii (with different $C$ and $\alpha$ that depend on the ratios).
\end{lemma}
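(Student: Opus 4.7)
The plan is to pass to the equivalent $\varepsilon$-formulation highlighted in the remark: the hypothesis is equivalent to saying that whenever $\sup_{B_{br}(x)} |u| \leq 1$ and $\sup_{B_r(x)} |u| \leq \varepsilon$, one has $\sup_{B_{ar}(x)} |u| \leq C \varepsilon^{\alpha}$. Working in this form makes iteration transparent: each application converts a smallness bound on one ball into a smallness bound on a slightly larger ball, at the cost of raising $\varepsilon$ to the power $\alpha$ and multiplying by $C$. The goal then is to derive, for any target ratios $1 < a' < b'$, the analogous smallness-propagation statement, which is equivalent to the three balls inequality with the new ratios.

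Two reductions are essentially free by monotonicity of $\sup$: one may always shrink the inner radius or enlarge the outer radius relative to the hypothesis without changing the constants. The substantive task is to enlarge the middle ball relative to the inner ball. After rescaling so that $r = 1$, this amounts to deducing smallness of $|u|$ on $B_{a'}(x)$ from smallness on $B_1(x)$, where $a'$ may be as large as we like provided $B_{a'}(x)$ sits comfortably inside $B_b(x)$.

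For this I would use a Harnack-type chain argument. Given any $y \in B_{a'}(x)$, connect $y$ to $x$ by a chain $B_\rho(y_0), B_\rho(y_1), \dots, B_\rho(y_N)$ with $y_0 = x$, $y_N = y$, consecutive centers at distance less than $\rho$, and every dilate $B_{b\rho}(y_j)$ contained in $B_b(x)$. Iterating the three balls inequality along the chain then gives
\[
|u(y)| \;\leq\; C^{\,1+\alpha+\alpha^2+\cdots+\alpha^{N-1}} \, \varepsilon^{\alpha^{N}},
\]
since at each step a smallness bound $\varepsilon_j$ on $B_\rho(y_j)$ upgrades to $C \varepsilon_j^{\alpha}$ on a ball containing $y_{j+1}$. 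Taking the supremum over $y \in B_{a'}(x)$ yields the $\varepsilon$-form of the three balls inequality with new exponent $\tilde\alpha = \alpha^N$ and new constant $\tilde C = C^{(1-\alpha^N)/(1-\alpha)}$.

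The main obstacle is making sure that the number of chain steps $N$ depends only on the ratios $a, b, a', b'$ and the dimension, not on $u$ or the scale. This is handled by choosing $\rho$ small enough that $B_{b\rho}(y_j) \subset B_b(x)$ for every $y_j \in B_{a'}(x)$ (it suffices that $b\rho < b - a'$), and then appealing to a uniform volume/covering count to bound the number of $\rho$-balls needed to link any two points of $B_{a'}(x)$. Once $N$ is pinned down, passing back out of the $\varepsilon$-formulation by normalization delivers the desired three balls inequality with ratios $a', b'$, after which enlarging the outer radius further to any $b' > a'$ is free. The symmetric question of shrinking the middle ball relative to the outer one, should it arise, is an easier variant of the same iteration applied inside nested concentric balls.
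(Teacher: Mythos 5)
Your strategy is essentially the paper's: both proofs iterate the given inequality over a uniformly bounded number $N$ of small balls, accumulating the constant $C^{(1-\alpha^N)/(1-\alpha)}$ and the exponent $\alpha^N$, with $N$ depending only on the ratios. The paper organizes the iteration as a concentric expansion of the inner ball (placing each auxiliary ball $B_{\rho_{i}}(x_i)$ just inside the current enlarged ball and gaining radius $(a-1)\rho_i$ per step), whereas you chain from $x$ to each point of $B_{a'r}(x)$ separately; the two bookkeeping schemes are interchangeable and the $\varepsilon$-reformulation you start from is exactly the one the paper's remark invokes.

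One detail as written does not close. You space consecutive chain centers at distance less than $\rho$ and assert that the upgraded bound $C\varepsilon_j^{\alpha}$ holds ``on a ball containing $y_{j+1}$.'' That ball is $B_{a\rho}(y_j)$, but to run the next step of the induction you need the bound on all of $B_{\rho}(y_{j+1})$, i.e.\ $B_{\rho}(y_{j+1}) \subseteq B_{a\rho}(y_j)$, which requires $|y_{j+1}-y_j| \leq (a-1)\rho$. When $a<2$ your spacing is too coarse and the induction hypothesis is not restored. The fix is to take spacing $(a-1)\rho$; the covering count $N$ is still controlled by the ratios and the dimension, so nothing else changes. Relatedly, the dilates $B_{b\rho}(y_j)$ must sit inside the \emph{new} outer ball $B_{b'r}(x)$ (you write $B_b(x)$ in several places), forcing $\rho$ on the order of $(b'-a')r/b$; once this is stated correctly the argument covers arbitrary $1<a'<b'$, including $b'<b$, in one pass, and the separate ``shrinking the middle ball relative to the outer one'' step you gesture at in the last sentence is not needed.
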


\begin{proof}
Let $1 < a' < b'$ be a different set of ratios. Consider two balls $B_r(x) \subset B_R(x) \subset \Omega$ such that $R/r = b'$. In what follows, we present a step by step expansion of the smallest ball $B_r(x)$ until it reaches the radius $\rho = a'r$.

\textbf{\textit{Step 1.}} Consider a ball $B_{\rho_1}(x_1)$ inside $B_r(x)$. We would like to apply the three balls inequality for $B_{\rho_1}(x_1) \subset B_{a\rho_1}(x_1) \subset B_{b\rho_1}(x_1)$. For that reason we want $B_{b\rho_1}(x_1) \subset B_R(x)$, so this gives us the constraint $\ds \rho_1 \leq \frac{R-r}{b-1}$. Now, $M_{x_1}(\rho_1) \leq M_{x}(r)$ and $M_{x_1}(b\rho_1) \leq M_{x}(R)$. Therefore, applying the three balls inequality for $B_{\rho_1}(x_1) \subset B_{a\rho_1}(x_1) \subset B_{b\rho_1}(x_1)$ we have
    \begin{equation*}
        M_{x_1}(a\rho_1) \leq C M_x(r)^{\alpha} M_x(R)^{1-\alpha}.
    \end{equation*}
We can safely assume $C > 1$, and if we allow $x_1$ to vary along $\partial B_{r-\rho_1}(x)$ (assuming $\rho_1 < r$), we get that
    \begin{equation*}
        M_x(r+(a-1)\rho_1) \leq C M_x(r)^{\alpha} M_x(R)^{1-\alpha}.
    \end{equation*}
    
\definecolor{ffffff}{rgb}{1.,1.,1.}
\definecolor{uuuuuu}{rgb}{0.26666666666666666,0.26666666666666666,0.26666666666666666}
\definecolor{ffwwzz}{rgb}{1.,0.4,0.6}
\definecolor{xdxdff}{rgb}{0.49019607843137253,0.49019607843137253,1.}
\definecolor{qqttzz}{rgb}{0.,0.2,0.6}
\definecolor{ududff}{rgb}{0.30196078431372547,0.30196078431372547,1.}
\begin{center}
\begin{tikzpicture}[line cap=round,line join=round,>=triangle 45,x=0.48cm,y=0.48cm]
\clip(-0.2,-2.) rectangle (24.7,11.);
\draw [line width=0.4pt,color=ffwwzz,fill=ffwwzz,fill opacity=0.05000000074505806] (19.441961263685982,5.) circle (2.4269983594690028cm);
\draw [line width=2.pt,color=ffffff,fill=ffffff,fill opacity=1.0] (16.,5.) circle (2.4cm);
\draw [line width=0.4pt,color=qqttzz] (5.,5.) circle (2.4cm);
\draw [line width=0.4pt,color=qqttzz] (5.,5.) circle (0.96cm);
\draw [line width=0.4pt,color=ffwwzz] (6.240347337739723,5.) circle (0.36463327788493327cm);
\draw [line width=0.4pt,color=ffwwzz] (6.240347337739723,5.) circle (0.7578773140191033cm);
\draw [line width=0.4pt,color=ffwwzz] (6.240347337739723,5.) circle (1.3246332778849328cm);
\draw [line width=0.4pt,color=qqttzz] (16.,5.) circle (2.4cm);
\draw [line width=0.4pt,color=ffwwzz] (19.441961263685982,5.) circle (0.7478585934307284cm);
\draw [line width=0.4pt] (16.,5.)-- (11.,5.);
\draw [line width=0.4pt] (19.441961263685982,5.)-- (21.,5.);
\draw [line width=0.4pt] (21.,5.)-- (24.498207845913072,5.);
\draw [line width=0.4pt,color=ffwwzz] (19.441961263685982,5.) circle (0.7478585934307284cm);
\draw (3.498342150889992,-0.274119595765216) node[anchor=north west] {$Step$ $1$.};
\draw (15.992738788988431,-0.274119595765216) node[anchor=north west] {$Step$ $n+1$.};
\begin{scriptsize}
\draw [fill=ududff] (5.,5.) circle (1.0pt);
\draw[color=ududff] (5.096331425739354,5.477850837892327) node {$x$};
\draw[color=qqttzz] (2.12725639780368,9.936362843538292) node {$B_R(x)$};
\draw[color=qqttzz] (3.2423234059357707,7.155478162967053) node {$B_r(x)$};
\draw [fill=xdxdff] (6.240347337739723,5.) circle (1.0pt);
\draw[color=xdxdff] (6.321197361356383,5.46805191040739) node {$x_1$};
\draw [fill=ududff] (16.,5.) circle (1.0pt);
\draw[color=ududff] (15.953543079048687,5.517046547832072) node {$x$};
\draw[color=qqttzz] (12.827685211354032,9.799177858749186) node {$B_n$};
\draw [fill=xdxdff] (19.441961263685982,5.) circle (1.0pt);
\draw[color=xdxdff] (19.294977351411937,5.566041185256752) node {$x_{n+1}$};
\draw [fill=xdxdff] (24.498207845913072,5.) circle (1.0pt);
\draw [fill=uuuuuu] (11.,5.) circle (1.0pt);
\draw[color=black] (14.721090013612426,6.212770399262541) node {$r + (a-1)(\rho_1 + ... + \rho_n)$};
\draw [fill=uuuuuu] (21.,5.) circle (1.0pt);
\draw[color=black] (20.000500130327346,4.429365597004154) node {$\rho_{n+1}$};
\draw[color=black] (22.70379768107936,5.703226170045859) node {$(a-1)\rho_{n+1}$};
\end{scriptsize}
\end{tikzpicture}  
\end{center}

\textbf{\textit{Step from $n$ to $n+1$.}} We have expanded the smallest ball to $$B_n = B_{r + (a-1)(\rho_1 + ... + \rho_n)}(x)$$ where the constraints $\rho_n \leq r/2$
ensure that $B_{\rho_n}(x_n)\subset B_n$ and the constraints $$\ds \rho_{i+1} \leq \frac{R-r-(a-1)(\rho_1 + \dots + \rho_i)}{b-1}$$ ensure that all of the expansion balls $B_{b\rho_n}(x_n)$ are inside $B_R(x)$ at each step. We have also deduced the inequality
    \begin{equation*}
        M_x(r+(a-1)(\rho_1 + \dots + \rho_n)) \leq C^{\frac{1-\alpha^n}{1-\alpha}} M_x(r)^{\alpha^n} M_x(R)^{1-\alpha^n}.
    \end{equation*}

 Choose $x_{n+1}$ at a distance $\rho_{n+1}$ from the boundary of $B_n$ where $$\ds \rho_{n+1} \leq \frac{R-r-(a-1)(\rho_1 + \dots + \rho_n)}{b-1}.$$ Thus $$M_{x_{n+1}}(\rho_{n+1}) \leq M_x(r+(a-1)(\rho_1 + \dots + \rho_n))$$ and $M_{x_{n+1}}(b\rho_{n+1}) \leq M_x(R)$. We apply the three balls inequality:
\begin{gather*}
    M_{x_{n+1}}(a\rho_{n+1}) \leq C M_{x_{n+1}}(\rho_{n+1})^{\alpha}M_{x_{n+1}}(b\rho_{n+1})^{1-\alpha}, \\
     M_{x_{n+1}}(a\rho_{n+1}) \leq C [C^{\frac{1-\alpha^n}{1-\alpha}} M_x(r)^{\alpha^n} M_x(R)^{1-\alpha^n}]^\alpha M_{x_{n+1}}(b\rho_{n+1})^{1-\alpha}, \\
     M_{x_{n+1}}(a\rho_{n+1}) \leq C^{\frac{1-\alpha^{n+1}}{1-\alpha}} M_x(r)^{\alpha^{n+1}} M_x(R)^{1-\alpha^{n+1}}.
\end{gather*}
Now, allowing $x_{n+1}$ to range over $\partial B_{r + (a-1)(\rho_1 + ... + \rho_n) - \rho_{n+1}}(x)$, we obtain
\begin{equation*}
    M_x(r+(a-1)(\rho_1 + \dots + \rho_{n+1})) \leq 
    C^{\frac{1-\alpha^{n+1}}{1-\alpha}} M_x(r)^{\alpha^{n+1}} M_x(R)^{1-\alpha^{n+1}},
\end{equation*}
which completes the inductive construction.

What remains to be shown is that there exists $n$ such that $$r+(a-1)(\rho_1 + \dots + \rho_n) \geq a'r.$$ If that is the case, then $M_x(a'r) \leq M_x(r+(a-1)(\rho_1 + \dots + \rho_n))$ and therefore $$M_x(a'r) \leq C^{\frac{1-\alpha^n}{1-\alpha}} M_x(r)^{\alpha^n} M_x(b'r)^{1-\alpha^n},$$ and we are done.

So let $$\ds \rho_{i+1} = \min(r/2,\frac{R-r-(a-1)(\rho_1 + \dots + \rho_i)}{b-1}), \quad \forall i \in \N.$$ 

For every $\rho$ such that $r < \rho < R$, there exists $n$ for which $$r+(a-1)(\rho_1 + \dots + \rho_n) \geq \rho.$$
The last statement can be shown by assuming the contrary.
Indeed, if $$ R-r-(a-1)(\rho_1 + \dots + \rho_n)\geq R - \rho,$$
then 
$\rho_{n}$ are uniformly bounded below by the positive constant $\min(r/2, \frac{R-\rho}{b-1}) $ and therefore
$r+(a-1)(\rho_1 + \dots + \rho_n)$ will be bigger than $ \rho$ for sufficiently large $n$.

\end{proof}

\subsection{Real-Analyticity and Three Balls Inequality} \label{a2}
   Let $L=\sum_{|\beta|\leq m} a_{\beta} \partial^\beta$ be a linear elliptic differential operator with real-analytic coefficients in a unit ball $B=B_1(0)\subset \mathbb{R}^n$ such that
   \begin{equation} \label{eq: analyticity}
   \sum_{|\beta|\leq m} \sup_{B} |\partial^\alpha a_\beta| \leq A^{|\alpha|} \alpha!
   \end{equation}
   for some constant $A$.
   It is well-known (but not easy to find a good reference to) that any solution to $Lu=0$ is real analytic. In \cite{LH} (p.178-180), there is a quantitative and uniform version of this statement, which implies that there is $C>0$ and $r>0$ such that
   \begin{equation} \label{eq: Hormander}
    \sup_{B_r} |\partial^{\alpha} u| \leq \sup_{B} |u| C^{|\alpha|+1} \alpha!.
    \end{equation}
    It allows us to conclude that $u$ has a holomorphic extension to some complex neighborhood of zero:
    there is $r>0$ such that $u$ can be extended to the complex ball
    $B_{\rho}^{\mathbb{C}} \subset \mathbb{C}^n$ with center at the origin of radius some $\rho$, which depends on the radius of convergence of the Taylor series. The holomorhpic extension comes with the estimate:
    \begin{equation} \label{eq: holomorphic extension}
    \sup \limits_{B_{\rho}^{\mathbb{C}}}|u| \leq C \sup \limits_{B}|u|.
    \end{equation}

    \begin{remark} Donelly and Fefferman (see \cite{DF}) applied the Cauchy-type estimate \eqref{eq: Hormander}  to Laplace eigenfunctions
    $\varphi_\lambda:$ $\Delta \varphi_\lambda + \lambda \varphi_\lambda=0$
    on Riemannian manifolds with real-analytic metrics
    in the setting when $\lambda \to \infty$. One may object that the coefficients of the equation tend to infinity; however, if one considers a geodesic ball of radius $1/\sqrt \lambda$, the $\sqrt \lambda $ rescaled equation has uniform bounds for the coefficents and their derivatives.
  It implies that $\varphi_\lambda$ has a holomorphic extension to a complex neighborhood  of radius 
  $c/\sqrt \lambda$ of any point $x$ on the manifold with estimate in local coordinates:
  $$ \sup\limits_{B_{c/\sqrt \lambda}^\mathbb{C}(x)} | \varphi_\lambda | \leq C \sup\limits_{B_{1/\sqrt \lambda}(x)} | \varphi_\lambda |.$$
   \end{remark}
   As in the work of Donnelly and Fefferman, we also need a scaled version of \eqref{eq: analyticity} and \eqref{eq: holomorphic extension}.
   Given a ball $B_\rho(x_0)\subset B_1(0)$, any solution to $Lu=0$ in $B_\rho(x_0)$ satisfies the Cauchy-type estimate:
   \begin{equation}
   |\partial^\alpha u(x_0)| \leq \sup\limits_{B_\rho(x_0)} |u| C^{|\alpha|+1} \alpha! / \rho^{|\alpha|}. 
   \end{equation}
   and the Taylor series of $u$ converge in a complex neighborhood of $x_0$ of radius $c\rho$:
   \begin{equation} \label{eq: local holomorphic extension}
   \sup\limits_{B_{c\rho}^\mathbb{C}(x_0)} | u | \leq  C \sup\limits_{B_{\rho}(x_0)} | u |,
   \end{equation}
   where $c>0$ is a small numerical constant and $C>0$ is a large numerical constant.
   
   The holomorhic extension with estimate for solutions to $Lu=0$ implies the three balls theorem \eqref{eq:3b} and its version for wild sets \eqref{eq:3b**} (see \cite{SV}, Theorem 1).

\end{document}